\newcommand{\PD}{{\rm PD}}
\newcommand{\Cc}{{\mathcal C}}
\newcommand{\Jj}{{\mathcal J}}
\newcommand{\less}{{\smallsetminus}}
\newcommand{\om}{{\omega}}
\newcommand{\eps}{{\varepsilon}}
\newcommand{\ka}{{\kappa}}
\newcommand{\Si}{{\Sigma}}
\newcommand{\Ss}{{\mathcal S}}
\newcommand{\Ee}{{\mathcal E}}
\newcommand{\Vv}{{\mathcal V}}
\newcommand{\C}{{\mathbb C}}
\newtheorem{theorem}{Theorem}[section]
\newtheorem{lemma}[theorem]{Lemma}
\numberwithin{figure}{section}
\numberwithin{equation}{section}
\newcommand{\MS}{{\medskip}}
\newcommand{\NI}{{\noindent}}
 \title{Symplectic embeddings of $4$-dimensional ellipsoids:  Erratum}
 \author{Dusa McDuff}\thanks{partially supported by NSF grant DMS 0905191}
\address{Department of Mathematics,
 Barnard College, Columbia University
New York, USA}
\email{dusa@math.columbia.edu} 
\keywords{symplectic embedding, pairwise connect sum, symplectic inflation,}
\subjclass[2000]{53D35}
\date{March 1, 2015}
\begin{document}

\maketitle

The paper \cite{Mce}  gives necessary and sufficient conditions for one $4$-dimensional ellipsoid to embed symplectically in another.
Emmanuel Opshtein pointed out that one step in the proof of sufficiency (Theorem~3.11)  was unjustified.  When embedding an ellipsoid into a ball, the idea is to embed a small ellipsoid  into the ball, perform 
a blow up procedure to convert this ellipsoid into two chains of embedded spheres $\Ss$, and then \lq\lq inflate" normal to these spheres, a process that has the effect of deforming    the symplectic form $\om_0$ so as to increase the relative size of the configuration $\Ss$ and hence of the ellipsoid.  
The inflation process described 
in Claim 2 of the proof of
\cite[Theorem~3.11]{Mce} used an embedded $J$-holomorphic curve $C_A$ in an appropriate homology class $A$  that intersects all the spheres in $\Ss$ $\om_0$-orthogonally.  
However the existence of the  embedded curve $C_A$ was not properly established.
Although an embedded representative of $A$ must  exist for
a generic $\om$-tame almost complex structure $J$, in the ellipsoidal context 
 one must work with a {\it non-generic } $J$, one for which
all the embedded spheres in $\Ss$ are also holomorphic.  At the time of writing, it is still unclear 
whether or not such an embedded curve must always exist, a question discussed further in~\cite{MO}.  
This question is resolved in the affirmative when all spheres in $\Ss$ have zero first Chern class: 
cf. Biran~\cite[Lemma~2.2B]{Bir}.  However, this condition is usually not satisfied in our situation.\MS

Although this problem does not affect any of the statements in these papers,
there are three places where this problem affects the argument:

\begin{itemize}\item[(I)]  in the proof of \cite[Theorem~3.11]{Mce};
\item[(II)] in the proof of \cite[Corollary~1.6(i)]{Mce} that claims spaces of embeddings of
ellipsoids into ellipsoids are path connected; and 
\item[(III)]  in the application of these ideas in \cite[Lemma~2.18(i)]{Mc6}.
\end{itemize}
In Case~(I), it turns out to be easy to fill the gap using the more general inflation procedure developed by Li--Usher in \cite{LU}.
We assume below that 
 $(M,\om_0)$ is a closed symplectic $4$-manifold that contains
a collection $\Ss$ of symplectically embedded and $\om_0$-orthogonally intersecting surfaces $C_i,1\le i\le L,$ in classes $S_i$ and of self-intersections $S_i\cdot S_i = k_i$.   A component $C_i$ is called positive (resp. negative) if 
$k_i\ge 0$ (resp. $k_i< 0$.

\begin{lemma}\label{le:1} With $\Ss$ as above,  let $m_i\ge 0$ be integers such that the class
 $A: = \sum_{i=1}^L m_i S_i$ satisfies $A\cdot S_i\ge 0$ for all $i$.
Then for each $\ka>0$  there is a symplectic form $\om_\ka$ on $M$ such that
\begin{itemize}\item[(i)]   $[\om_\ka] = [\om_0] + \ka \PD(A)$, where $\PD(A)$ denotes the Poincar\'e dual of $A$;
\item[(ii)] the restriction of  $\om_\ka$ to each smooth component $C_i$  of  $\Ss$ is symplectic.
\end{itemize}
\end{lemma}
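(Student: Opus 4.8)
\emph{Proof idea.} The plan is to produce $\om_\ka$ by inflating $\om_0$ directly along the $J$-holomorphic subvariety carried by $\Ss$ itself, rather than (as in the original, flawed argument) along a hypothetical embedded curve in the class $A$. Because some components of $\Ss$ have negative square, classical inflation (McDuff, Biran) will not suffice, and the refined inflation of Li--Usher \cite{LU} is needed.

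\emph{Step 1: a good $J$.} I would first choose an $\om_0$-tame almost complex structure $J$ on $M$ for which every $C_i$ is $J$-holomorphic. This is the one place the $\om_0$-orthogonality of the intersections is used: at each intersection point one takes local coordinates in which the two surfaces through it are complex coordinate axes and $J$ is standard, and these local models glue over the remaining parts of the $C_i$ and then over all of $M$ precisely because the $C_i$ are symplectic and meet $\om_0$-orthogonally (a standard construction; compare \cite{Mce}). Discarding the components with $m_i=0$ and keeping the rest with their multiplicities $m_i\ge1$, this gives a $J$-holomorphic subvariety $Z=\sum m_iC_i$ with $[Z]=A$.

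\emph{Step 2: inflate.} I would then apply the Li--Usher construction \cite{LU} to $Z$ to obtain a closed $2$-form $\al$ on $M$, supported in an arbitrarily small neighbourhood of $Z$, Poincar\'e dual to $A$, and non-negative on every $J$-complex tangent line. The only obstacle to such an $\al$ is positivity of its integral over each component, $\int_{C_i}\al=A\cdot S_i$; this holds by hypothesis --- and is in fact automatic on the positive components, since positivity of intersections for distinct $J$-holomorphic curves gives $S_i\cdot S_j\ge0$ for $i\ne j$, so that $A\cdot S_i\ge m_ik_i\ge0$ whenever $k_i\ge0$. Setting $\om_\ka:=\om_0+\ka\,\al$, it is closed, represents $[\om_0]+\ka\,\PD(A)$, and for every nonzero tangent vector $v$ satisfies $\om_\ka(v,Jv)=\om_0(v,Jv)+\ka\,\al(v,Jv)>0$, because $\om_0$ tames $J$ and $\al$ is non-negative on the complex line spanned by $v$. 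Hence $\om_\ka$ is symplectic and tames $J$, which is~(i); and~(ii) is then immediate, since every $C_i$ (including those with $m_i=0$) is $J$-holomorphic, so $\om_\ka$ restricts to a nowhere-vanishing --- hence symplectic --- $2$-form on each surface $C_i$.

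The main obstacle is the content hidden in Step~2: constructing the form $\al$ near a negative component $C_i$ of $Z$, where the naive Poincar\'e-dual form is negative along the curve. The point of \cite{LU} is that the positive contributions coming from the components meeting $C_i$ can be made to compensate exactly when $A\cdot S_i\ge0$ --- which is the numerical hypothesis in the statement --- and for this I would simply quote their theorem. If all components had non-negative square, Step~2 would reduce to the classical inflation lemma.
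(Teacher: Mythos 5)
Your overall strategy --- inflating along the components of $\Ss$ themselves with their multiplicities, using Li--Usher to handle the negative components, rather than along a putative embedded curve in class $A$ --- is exactly the strategy of the paper's proof. But Step~2 contains a genuine gap: you attribute to \cite{LU} the construction of a single closed $2$-form $\al$, Poincar\'e dual to $A=\sum m_iS_i$, supported near $Z$ and nonnegative on every $J$-complex line, so that $\om_0+\ka\,\al$ is symplectic for \emph{all} $\ka>0$ at once. That is not what \cite{LU} provides. Their Theorem~2.3 is a Gompf pairwise-sum construction along a \emph{single} embedded surface $C_i$: it produces a new symplectic form (by surgery, not by adding a fixed closed $2$-form) in the class $[\om]+\eps\PD(S_i)$, and when $k_i=S_i\cdot S_i<0$ it does so only for $\eps<\om(S_i)/|k_i|$. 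A global taming $2$-form dual to a configuration containing negative components is precisely the hard object whose construction occupies \cite[Prop.~5.1.2]{MO}; it is not available by ``simply quoting'' \cite{LU}, and the point of Lemma~\ref{le:1} is to get the conclusion without it.

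Because each Li--Usher step along a negative component is bounded, the real content of the proof --- which your proposal omits --- is how to reach arbitrary $\ka$. The paper inflates by $m_i\eps_0$ along each $C_i$ in turn, with $\eps_0$ below all the thresholds $\om_0(S_i)/(m_i|k_i|)$, and then observes that the hypothesis $A\cdot S_i\ge0$ forces $\om_{\eps_0}(S_i)\ge\om_0(S_i)$ for every $i$: inflating along $C_j$ with $j\ne i$ adds $\eps_0 m_j\,S_j\cdot S_i\ge 0$ to the area of $C_i$, inflating along $C_i$ itself adds $\eps_0 m_ik_i$, and the total change is $\eps_0\,A\cdot S_i\ge0$. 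Hence the same step size $\eps_0$ remains admissible at every stage, and any $\ka$ is reached in finitely many steps. This is where $A\cdot S_i\ge0$ actually enters; in your write-up it is used only to make $\int_{C_i}\al\ge0$ (note, incidentally, that the hypothesis gives nonnegativity, not the positivity you assert). If you replace Step~2 by this componentwise iteration, your Step~1 becomes unnecessary: the Gompf sum needs only that the $C_i$ are symplectic and meet $\om_0$-orthogonally, with no auxiliary $J$.
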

\begin{proof} 
First note that, given any symplectic form $\om$ satisfying (ii), one can use Gompf's pairwise sum construction as in \cite[Theorem~2.3]{LU} to construct a new symplectic form $\om_i'$ that still satisfies (ii) and lies in class $[\om]+ \eps \PD(S_i)$ provided only that
$$
\om(S_i) +\eps k_i =  \int_{C_i} \om\ +\  \eps \PD(S_i)   > 0.\qquad (*)
$$
Thus for negative components $C_i$ one must choose $\eps< \frac{\om(S_i)}{|k_i|}$, while $\eps$ can be arbitrary for positive components.  Notice also that $\om_i'(S_j) \ge \om(S_j)$ for all $i\ne j$.  In other words, inflating by $\eps$ along $C_i$ increases the size of all the curves in $\Ss$ except perhaps for $C_i$ itself.

Now choose $\eps_0$ so that 
$$
0\; <\ \eps_0\ <\  \min_{1\le i\le L,  m_i>0, k_i<0} \ \frac {\om_0(S_i)}{m_i\ |k_i|}.
$$
Then, 
the class $[\om_0] + \eps_0 m_i\PD(S_i)$ evaluates positively  on each $S_j, j=1,\dots,L$. 
Hence, for each $\ka\le \eps_0$ we can inflate by the amount $m_i\ka$  along each curve $C^S_i, i=1,\dots, L,$ in turn, to construct a form 
$\om_{\ka}$ satisfying (i)  and also (ii).  

We now repeat this process starting with $\om_{\eps_0}$ instead of $\om_0$. 
Note that, although  $\om_{\eps_0}$ is  nondegenerate on each  $C^S_i$, the area has been redistributed, concentrating near the points where $C^S_i$ intersects the other components $C^S_j$. However, because $A\cdot S_i\ge 0$, 
 the total area of $C^S_i$ does not decrease, i.e.  $\om_{\eps_0}(S_i)\ge \om_0(S_i)$ for all  $i$. Hence
 the
 next step can also be of size $\eps_0$ since it satisfies $(*)$ when $\om_0$ is replaced by $\om_{\eps_0}$.
Continuing in this way, we may  therefore  construct a suitable form $\om_\ka$ for any given $\ka$ by
 a finite number of such steps.
\end{proof}

In the application in \cite{Mce}, when one is embedding one ellipsoid into another, 
one starts with a singular set $\Ss'$ in a blow up of $\C P^2$ that consists
 of four chains of spheres, two from the inner approximation to the larger ellipsoid and two from the outer approximation to the smaller ellipsoid. We need to inflate along an integral class $A': = qA$ with $(A')^2>0$ as described in \cite[Theorem~3.11]{Mce}. Here $A$ is chosen so that   
 \begin{itemize}\item $A\cdot S_i = 0$ for all $C_i$ in $\Ss'$,
 \item
 $A\cdot E\ge 0, E\in \Ee,$ where $\Ee$ is the set of exceptional classes, 
 i.e. classes that can be represented by symplectically embedded spheres of self-intersection $-1$.
   \end{itemize}
Given any $\om$-tame $J$ such that the curves in $\Ss'$ have $J$-holomorphic representatives, 
Taubes--Seiberg--Witten theory implies that $A'$ has some connected $J$-holomorphic nodal representative $\Si^{A'}$; see
 \cite[Lemma~3.1.1]{MO}.
 If we choose a  generic $\om$-tame $J$ with holomorphic restriction to $\Ss'$,
 then all the components of $\Si^{A'}$ that do not coincide with a component of $\Ss'$ must 
 be multiple covers of curves that are regular in the Gromov--Witten sense.
 Hence  if we also assume that
$J$ is integrable near $\Ss'$, then 
by standard arguments as described in \cite{LU}
  we may alter $J$ away from $\Ss'$ and perturb  the nodal curve
  to obtain a $J'$-holomorphic nodal representative  $\Si^{A'}$
  of $A'$ such  that
  \begin{itemize} 
  \item each smooth component of $\Si^{A'}$   is a multiple cover of an  embedded curve that  either is a curve in $\Ss'$, or  is an exceptional sphere,  or has  nonnegative self-intersection; in all cases it has nonnegative intersection with $A'$;
  \item all intersections of  these curves with each other and with  $\Ss'$ are $\om$-orthogonal.  
  \end{itemize}
For a detailed proof of this statement, see Case 1 of the proof of \cite[Prop.~3.1.3]{MO}.
It follows that, if we define $\Ss$ to consist of all the spheres in $\Ss'$ together with the embedded curves underlying the components of $\Si^{A'}$,  the class $A'$ has a representative of the form considered in Lemma~\ref{le:1}.   This fills the gap in Case~(I).
Proposition~5.1.2 in \cite{MO} is a more elaborate version of this inflation result.
 
 In Case~(II), one must check that this argument applies when the exceptional set $\Ss$ and class $A'$ are fixed, but one starts with a family of symplectic forms $\om_t, t\in [0,1],$ each of which is nondegenerate on the spheres in 
 $\Ss$.  For this it suffices to show  that a generic choice of $\om_t$-tame 
 $J_t$, holomorphic near $\Ss$, can be perturbed on $M\less \Ss$ so that
 there is a suitable $1$-parameter family of $J_t'$-holomorphic
 nodal curves $\Si^{A'}_t$ whose topological type does not vary with $t$.  Since bubbling for
  closed curves occurs in codimension $\ge 2$,  this is always possible. For more details, see~\cite[Thm~1.2.12]{MO}.

 Finally  Case~(III) concerns the following situation.  
 The set of spheres $\Ss$ (called $\Cc$ in \cite{Mc6}) is a union of spheres of self-intersection $-2$ lying in a four-manifold called $X_k$, and we are concerned with finding suitable embedded representatives of  a particular  class $E_3\in H_2(X_k)$ that for generic $J$ would be represented by an exceptional sphere.
In \cite{Mc6} we fix a suitable neighbourhood $\Vv$ of $\Ss$ and consider
 the space  $\Jj_\Vv(\Ss)$ of  tame almost complex structures  that are fixed near $\Ss$,
and its subset  $\Jj_{\Vv,reg}(\Ss)$ consisting
of all $J\in \Jj_\Vv(\Ss)$ for which every $J$-holomorphic sphere $u:S^2\to X_k$ whose image intersects  $X_k\less \Vv$
lies in a class $B$ with $c_1(B)>0$.  
We claim in \cite[Lemma~2.18~(i)]{Mc6}  that  $\Jj_{\Vv,reg}(\Ss)$ is  a residual and 
path-connected subset of $ \Jj_\Vv(\Ss)$, that in addition,  
has  the property that  
the class $E_3$ always has a smooth $J$-holomorphic representative.
The proof was inadequate,  particularly as regards the last claim.
However, in \cite[Prop~3.1.6]{MO} we prove 
that the subset $\Jj_{emb}(\Ss,A)$ of $J\in \Jj_\Vv(\Ss)$ for which $A: = E_3$ has an embedded representative
is both residual and path-connected.  Moreover, in the course of the proof we show that
 $J\in \Jj_\Vv(\Ss)$ belongs to $\Jj_{emb}(\Ss,A)$ provided that every $J$-holomorphic sphere 
not contained in $\Vv$ lies in a class $B$ with $d(B)\ge 0$, where 
$d(B): = B^2 + c_1(B)$ is the  Seiberg--Witten degree.  The   
adjunction inequality $1 + \frac 12(B^2- c_1(B))\ge 0$ for spheres gives $B^2\ge -1$, so that
the condition $c_1(B)>0$ implies $d(B)\ge 0$.
Hence $\Jj_{\Vv,reg}(\Ss)\subset \Jj_{emb}(\Ss,A)$.  Further, $\Jj_{\Vv,reg}(\Ss)$ is open and,
as we note in 
the proof of \cite[Lemma~2.18]{Mc6}, contains the path-connected, residual set
that is called $\Jj_{semi}(\Ss,A)$ in \cite{MO}.  Hence the proof given in \cite{MO} that
$\Jj_{emb}(\Ss,A)$ is path connected applies also to show that $\Jj_{\Vv,reg}(\Ss)$ is path-connected.

Because $\Ss$ consists of spheres with $c_1 = 0$ one could also prove the above statements using 
more numerical arguments,
 as in  Biran~\cite[Lemma~2.2B]{Bir}.  However, his result concerns only the existence of single 
embedded representatives,
not $1$-parameter families of such, and so completing the proof using this approach would also entail some amount of work.

\MS

Other more minor corrections to \cite{Mce}:
\MS

\NI p2:  Since $B(n)$ denotes the ball of radius  $\sqrt{n}$, for consistency $\C P^2(\mu)$ should denote the complex projective plane with the Fubini-Study form normalized so that the area of a line is $\pi \mu$.

\NI In Corollary 1.2, $v(k)$ denotes the packing constant for embeddings of $E(1,k)$ into $B(\sqrt k): = E(\sqrt k, \sqrt k)$.

\NI p 3.    The volume capacity $V$ of an ellipsoid is its volume.

\NI p 4:  In first sentence of paragraph $2$ after Prop. 1.9 replace  $K\cdot E = 1$ by $K\cdot E = -1$.

\MS

\NI {\bf Acknowledgement.}  I wish to thank Emmanuel Opshtein very warmly for pointing out the gap in \cite{Mce}, and for working with me on the construction of embedded curves, and also the referee of this erratum for a very careful reading.

\bibliographystyle{alpha}

\end{document}